\newtheorem{thm}{Theorem}[section]
\newtheorem*{thm*}{Theorem}
\newtheorem{lem}[thm]{Lemma}
\theoremstyle{definition}
\newtheorem{defn}[thm]{Definition}
\theoremstyle{remark}
\newtheorem{rem}[thm]{Remark}
\numberwithin{equation}{section}
\newcommand{\abs}[1]{\left\vert#1\right\vert}
\newcommand{\set}[1]{\left\{#1\right\}}
\newcommand{\paren}[1]{\left(#1\right)}
\newcommand{\inv}[1]{\frac{1}{#1}}
\newcommand{\ov}[1]{\overline#1}
\newcommand{\ha}[1]{\widehat{#1}}
\newcommand{\nha}[1]{\widetilde{#1}}
\newcommand{\ca}[1]{\mathcal{#1}}
\newcommand{\ppp}{\ldots}
\newcommand{\R}{\mathbb R}
\newcommand{\N}{\mathbb N}
\newcommand{\la}{\lambda}
\newcommand{\m}{\mathfrak{m}}
\newcommand{\2}{/}
\begin{document}

\title[A Marstrand's projection type theorem]{A generalization of Marstrand's theorem for projections of cartesian products}%
\author{Jorge Erick L\'{o}pez \and Carlos Gustavo Moreira}%
\address{IMPA - Estrada D. Castorina, 110 - 22460-320 - Rio de Janeiro - RJ - Brasil}%
\email{jelv@impa.br; gugu@impa.br}%


\begin{abstract}
We prove the following variant of Marstrand's theorem about projections of cartesian products of sets:

Let $K_1,\ldots,K_n$ Borel subsets of $\mathbb R^{m_1},\ldots
,\mathbb R^{m_n}$ respectively, and $\pi:\mathbb R^{m_1}\times\ldots\times\mathbb R^{m_n}\to\mathbb R^k$ be a surjective linear map. We set
$$\mathfrak{m}:=\min\left\{\sum_{i\in I}\dim_H(K_i) + \dim\pi(\bigoplus_{i\in I^c}\mathbb R^{m_i}), I\subset\{1,\ldots,n\}, I\ne\emptyset\right\}.$$
Consider the space $\Lambda_m=\{(t,O), t\in\mathbb R, O\in SO(m)\}$ with the natural measure and set $\Lambda=\Lambda_{m_1}\times\ldots\times\Lambda_{m_n}$. For every $\lambda=(t_1,O_1,\ldots,t_n,O_n)\in\Lambda$ and every $x=(x^1,\ldots,x^n)\in\mathbb R^{m_1}\times\ldots\times\mathbb R^{m_n}$ we define
$\pi_\lambda(x)=\pi(t_1O_1x^1,\ldots,t_nO_nx^n)$.
Then we have

\begin{thm*}

\emph{(i)} If $\mathfrak{m}>k$, then $\pi_\la(K_1\times\ppp\times K_n)$ has positive $k$-dimensional Lebesgue measure for almost every $\la\in\Lambda$.

\emph{(ii)} If $\mathfrak{m}\leq k$ and $\dim_H(K_1\times\ppp\times K_n)=\dim_H(K_1)+\ppp+\dim_H(K_n)$, then $\dim_H(\pi_\la(K_1\times\ppp\times K_n))=\mathfrak{m}$ for almost every $\la\in\Lambda$.
\end{thm*}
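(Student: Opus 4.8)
The plan is to prove both parts by the potential-theoretic (energy) method, transporting mass through the random projections $\pi_\la$ and averaging over the parameter space $\Lambda$. First I would reduce to the compact case by inner regularity, and then, for each $s_i<\dim_H(K_i)$, fix Frostman measures $\mu_i$ on $K_i$ with $\mu_i(B(x,r))\lesssim r^{s_i}$ and form the product $\mu=\mu_1\times\cdots\times\mu_n$ on $K:=K_1\times\cdots\times K_n$. For part (ii) the hypothesis $\dim_H(K)=\sum_i\dim_H(K_i)$ is used twice: it guarantees the upper bound $\dim_H\pi_\la(K)\le\m$ for every $\la$, since writing $V_{I^c}=\bigoplus_{i\in I^c}\R^{m_i}$ and splitting $\pi_\la(y)=\pi_\la(y_I,0)+\pi_\la(0,y_{I^c})$ the second summand lies in the fixed subspace $\pi(V_{I^c})$, so $\pi_\la(K)$ sits in a $\dim\pi(V_{I^c})$-neighbourhood of a Lipschitz image of $\prod_{i\in I}t_iO_iK_i$; as full dimension of $K$ is inherited by every subproduct, this yields $\dim_H\pi_\la(K)\le\sum_{i\in I}\dim_H(K_i)+\dim\pi(V_{I^c})$ for every $I$, hence $\le\m$. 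It also lets us use the product measure $\mu$ without losing dimension.

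The matching lower bound, and all of part (i), comes from a single averaged estimate. For part (ii) I would bound the averaged $s$-energy
\[
\int_\Lambda I_s\big((\pi_\la)_\ast\mu\big)\,d\la=\int\!\!\int\Big(\int_\Lambda\frac{d\la}{|\pi_\la(x-x')|^{s}}\Big)\,d\mu(x)\,d\mu(x'),
\]
while for part (i) I would bound $\int_\Lambda\|(\pi_\la)_\ast\mu\|_{L^2}^2\,d\la$, which after Plancherel becomes $\int_{\R^k}\int_\Lambda\prod_i|\widehat{\mu_i}(t_iO_i^\ast\pi_i^\ast\xi)|^2\,d\la\,d\xi$, with $\pi_i=\pi|_{\R^{m_i}}$ and $\pi_i^\ast$ its adjoint. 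In both cases Fubini isolates the inner $\Lambda$-integral for a fixed difference $u=x-x'$ (resp. a fixed frequency $\xi$); the $SO(m_i)$-averages turn into spherical averages while the scalings $t_i$ supply the radial integration, and the computation factorizes over the $n$ blocks.

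The heart of the matter is the resulting kernel estimate. Writing $\pi_\la(u)=\sum_i t_i\,\pi_i(O_iu^i)$ and setting $W_i:=\pi_i(\R^{m_i})$ (so $\sum_iW_i=\R^k$ and $W_i^\perp=\ker\pi_i^\ast$), one must control the small-ball behaviour of $\Lambda\{\la:|\pi_\la(u)|\le\e\}$, equivalently the size of $\int_\Lambda|\pi_\la(u)|^{-s}\,d\la$ and of $\int_{\R^k}\prod_i|\pi_i^\ast\xi|^{-s_i}\,d\xi$. These integrals are singular along the subspaces $W_i^\perp$ and have prescribed growth at infinity, and the point is that $\m$ is exactly the threshold governing their convergence: near the stratum $\bigcap_{i\in I^c}W_i^\perp=(\sum_{i\in I^c}W_i)^\perp$, which has codimension $\dim\pi(V_{I^c})$ because $\sum_{i\in I^c}W_i=\pi(V_{I^c})$, only the blocks $i\in I$ contribute decay (of total order $\sum_{i\in I}s_i$), so local integrability there is controlled by $\sum_{i\in I}s_i+\dim\pi(V_{I^c})$, while behaviour at infinity is controlled by the full sum $\sum_i s_i$. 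Choosing the $s_i<\dim_H(K_i)$ close enough to the dimensions that $\sum_{i\in I}s_i+\dim\pi(V_{I^c})>s$ for every nonempty $I$ (possible precisely when $s<\m$, resp. with $>k$ everywhere when $\m>k$) makes every stratum and the point at infinity integrable, so the averaged energy, resp. $L^2$ norm, is finite.

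Once this finiteness is in hand the conclusions follow: in part (ii), $I_s((\pi_\la)_\ast\mu)<\infty$ for a.e.\ $\la$ gives $\dim_H\pi_\la(K)\ge s$, and letting $s\uparrow\m$ along a countable sequence yields $\dim_H\pi_\la(K)\ge\m$, which with the upper bound gives equality a.e.; in part (i), $\|(\pi_\la)_\ast\mu\|_{L^2}<\infty$ for a.e.\ $\la$ makes $(\pi_\la)_\ast\mu$ absolutely continuous with an $L^2$ density, so $\pi_\la(K)$ has positive Lebesgue measure. I expect the main obstacle to be the kernel estimate of the third paragraph: proving the small-ball/transversality bound for $\{\pi_\la\}$ uniformly in $u$, correctly accounting for the interaction between the spherical averages (from the $O_i$) and the radial averaging (from the $t_i$) so that each block contributes the clean exponent rather than one capped by the scaling, and then organizing the multiscale decomposition in the sizes $|u^i|$ (resp. the stratification of $\R^k$) so that in each regime the dominant constraint is the minimizing subset $I$ and the sum over scales converges exactly under $s<\m$.
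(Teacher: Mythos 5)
Your overall strategy coincides with the paper's in outline: Frostman product measures, Fubini over $\Lambda$, blockwise $(t_i,O_i)$-averaging (with a Gaussian weight this average is exact, $\int_\R\int_{SO(m)}e^{i\eta\cdot tOz}\,|t|^{m-1}e^{-\frac12 t^2}\,d\Theta^m\,dt=c\,e^{-\frac12(|z||\eta|)^2}$, so each block does contribute the clean exponent $s_i$ with constant $I_{s_i}(\mu_i)$, as you hoped), the upper bound in (ii) via subproducts exactly as in the paper's proof of inequality (1.1), and the standard endgame ($L^2$ density for (i), energy for (ii)). But there is a genuine gap at exactly the point you flag as the ``main obstacle''. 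After the averaging, what must be proved is a bound uniform in the scales $z=(|u^1|,\ldots,|u^n|)$ of the form
\begin{equation*}
\int_{\R^k}\abs{\xi}^{\mathfrak{m}-k}\,e^{-\frac12\abs{D_z(\xi)}^2}\,d\xi\;\le\;C\,z_1^{-d_1'}\cdots z_n^{-d_n'},\qquad d_i'\le d_i,
\end{equation*}
where $D_z=\big(D^1(z_1),\ldots,D^n(z_n)\big)\circ\pi^T$, so that integrating against $d\mu\,d\mu$ yields a finite mixed energy $I_{d_1',\ldots,d_n'}(\mu)$. Your stratification heuristic (near $\bigcap_{i\in I^c}W_i^\perp$ the decay is $\sum_{i\in I}s_i$ against codimension $\dim\pi(\bigoplus_{i\in I^c}\R^{m_i})$; infinity is governed by $\sum_i s_i$) correctly identifies $\mathfrak{m}$ as the threshold stratum by stratum, but ``in each regime the dominant constraint is the minimizing subset $I$'' is not an argument and does not obviously close: the $n$ scales $z_i$ degenerate independently, the strata form a lattice (chains of subsets produce multi-parameter degenerations along flags, with sub-strata inside each $S_I$ where several $|\pi_i^T\xi|$, $i\in I$, are simultaneously small), and the extremal behaviour is governed by the face structure of the entire constraint polytope, not by one subset $I$ per region.

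The paper closes precisely this gap with a combinatorial ingredient absent from your proposal, the Weights Lemma (Lemma 2.4): the polyhedron $P=\{d\ge0:\ \sum_{i\in I}d_i+\dim(\sum_{i\in I^c}V_i)\ge s\ \text{for all }I\}$, with $V_i=\pi(\R^{m_i})$, has all of its vertices of the form $\widehat{J}(i)$, built from linearly independent subfamilies $J$ of the vectors $\pi(e^i_j)$; the proof is an induction on $n$ exploiting the submodularity of $I\mapsto\dim V_{I^c}$ (the family of tight constraints is closed under union and intersection, forcing a minimal tight set to be a singleton). Granting this, $d$ dominates a convex combination $\sum\alpha_{(J,i)}\widehat{J}(i)$; each $\widehat{J}(i)$ gives an elementary bound $z^{-\widehat{J}(i)}$ for the Gaussian integral by a linear change of variables adapted to the independent system $J$ (plus a short radial computation handling the fractional exponent $\mathfrak{m}-k'$), and taking the weighted geometric mean over $(J,i)$ produces the uniform bound with $d'\le d$. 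Some equivalent of this polyhedral interpolation is unavoidable in your scheme; without it, the multiscale decomposition you defer to is exactly the unsolved core of the theorem. Two smaller repairs: your $\Lambda$-integrals need the weight $\rho(\la)$ (or truncation in the $t_i$), since $\mathcal{L}^1\times\Theta^m$ has infinite mass in $t$; and when $\mathfrak{m}$ is an integer $\le k-1$ you should run the argument with slightly perturbed exponents, as the paper's Theorem 2.3 excludes $\mathfrak{m}\in\{0,1,\ldots,k-1\}$.
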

\end{abstract}
\maketitle
\section{Introduction}
The behavior of dimensions of projections of subsets of euclidean spaces has been studied for decades.

Let us denote by $\dim_H(X)$ the Hausdorff dimension of the set $X$. For $n$ and $k$ integers with $0<k<n$, $G(n,k)$ denotes the Grassmann manifold of all $k$-dimensional subspaces of $\R^n$, with the natural measure. For $V\in G(n,k)$, $P_V:\R^n\to V$ is the orthogonal projection onto $V$. The following is a fundamental result on dimensions of projections:
\begin{thm*}[Marstrand-Kaufman-Mattila]
Let $E\subset\R^n$ a Borel set. Then:

\emph{(i)} If $\dim_H(E)>k$, then $P_V(E)$ has positive $k$-dimensional Lebesgue measure for almost every $V\in Gr(n,k)$.

\emph{(ii)} If $\dim_H(E)\leq k$, then $\dim_H(P_V(E))=\dim_H(E)$ for almost every $V\in Gr(n,k)$.
\end{thm*}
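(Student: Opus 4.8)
The plan is to adapt the Kaufman--Mattila potential-theoretic proof of Marstrand's theorem, replacing the Grassmannian by the group $\Lambda$ and the single Frostman measure by a product of Frostman measures respecting the block decomposition. Fix exponents $s_i<\dim_H(K_i)$ and, by Frostman's lemma, pick Radon measures $\mu_i$ supported on $K_i$ with $\mu_i(B(x,r))\le C r^{s_i}$; put $\mu=\mu_1\times\cdots\times\mu_n$ and $\nu_\la=(\pi_\la)_*\mu$. Writing $\pi_i\colon\R^{m_i}\to\R^k$ for the restriction of $\pi$ to the $i$-th block, $V_i=\pi_i(\R^{m_i})$, and $z^i=x^i-y^i$, we have the identity $\pi_\la(x)-\pi_\la(y)=\sum_i t_i\pi_i(O_iz^i)$. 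Because $t_i$ ranges over all of $\R$, the measure on $\Lambda$ is infinite; since the conclusions are scale invariant and local in $\la$, I first reduce to proving all integral estimates with each $t_i$ confined to a fixed annulus $1\le|t_i|\le2$, a compact piece of $\Lambda$ that a.e. $\la$ meets.

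Both parts then follow from one averaged estimate. For part (ii) with $\m>\nobreak s$ I would bound the averaged Riesz energy $\int_\Lambda I_s(\nu_\la)\,d\la$, concluding $\dim_H\pi_\la(K)\ge s$ for a.e. $\la$ and letting $s\uparrow\m$; for part (i) with $\m>k$ I would bound $\int_\Lambda\int_{\R^k}|\widehat{\nu_\la}(\xi)|^2\,d\xi\,d\la$, concluding that $\nu_\la$ has an $L^2$ density for a.e. $\la$, so $\pi_\la(K)$ has positive Lebesgue measure. The matching upper bound in (ii) is elementary and holds for every nondegenerate $\la$: if $I$ realizes the minimum defining $\m$, then $\pi_\la(K)\subset A+V_{I^c}$, where $A$ is the Lipschitz image of $\prod_{i\in I}K_i$ and $V_{I^c}=\pi(\bigoplus_{i\in I^c}\R^{m_i})$ has dimension $d_{I^c}$; using $\dim_H(A+W)\le\dim_H(A)+\dim W$ for a subspace $W$, together with the hypothesis $\dim_H\prod_j K_j=\sum_j\dim_H K_j$ (which, by superadditivity of Hausdorff dimension under products, forces $\dim_H\prod_{i\in I}K_i=\sum_{i\in I}\dim_H K_i$), gives $\dim_H\pi_\la(K)\le\sum_{i\in I}\dim_H K_i+d_{I^c}=\m$.

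The crux is the transversality estimate
\begin{equation*}
\int_{SO(m_1)}\cdots\int_{SO(m_n)}\Big|\sum_i t_i\pi_i(O_iz^i)\Big|^{-s}\,dO_1\cdots dO_n\le C\prod_i|z^i|^{-a_i},
\end{equation*}
to be established for every exponent vector with $\sum_i a_i=s$ that lies strictly inside the polymatroid determined by the ranks $d_J=\dim\pi(\bigoplus_{i\in J}\R^{m_i})$, i.e. $\sum_{i\in J}a_i\le d_J$ for all $J$. Granting it, Fubini factorizes the averaged energy as a constant times $\prod_i I_{a_i}(\mu_i)$, which is finite provided $a_i<s_i$ for every $i$. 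Hence finiteness holds whenever the system $\sum_i a_i=s$, $a_i<s_i$, $\sum_{i\in J}a_i\le d_J$ is feasible; by the polymatroid intersection theorem the maximal value of $\sum_i a_i$ subject to $a_i\le s_i$ and $\sum_{i\in J}a_i\le d_J$ equals $\min_{I}\big[\sum_{i\in I}s_i+d_{I^c}\big]=\min(\m,k)$, the empty set contributing $d_{\{1,\dots,n\}}=k$. Thus for $s<\m\le k$ the system is feasible in the interior, yielding part (ii); while for $\m>k$ the maximum equals $k$ with the individual constraints $a_i<s_i$ slack, which is exactly the room needed to run the endpoint ($\sum_i a_i=k$) version controlling $\int|\widehat{\nu_\la}|^2$, yielding part (i).

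I expect the transversality estimate, with this sharp range of admissible exponents, to be the main obstacle. Its content is an anti-concentration bound for the random vector $\sum_i t_i\pi_i(O_iz^i)$, and here the scalings are indispensable: for fixed $t_i$ the vector $\pi_i(O_iz^i)$ lives on a sphere in $V_i$ and is never small, whereas averaging $t_i$ over the annulus spreads $t_i\pi_i(O_iz^i)$ into a measure comparable to Lebesgue on an annulus of $V_i$, of the correct dimension $d_{\{i\}}$. I would prove the bound by induction on the number of blocks, integrating out one pair $(t_i,O_i)$ at a time: conditioning on the remaining parameters reduces matters to the single-block estimate $\Pr_{t_i,O_i}\big(|t_i\pi_i(O_iz^i)-p|\le\rho\big)\le C(\rho/|z^i|)^{d_{\{i\}}}$, and tracking how the remaining exponent budget is allocated reproduces precisely the submodular constraints $\sum_{i\in J}a_i\le d_J$. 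Making this uniform in the offset $p$ (with an extra gain from the distance of $p$ to $V_i$ when $p\notin V_i$), checking that the greedy allocation in the induction realizes the polymatroid optimum $\min(\m,k)$, and handling the endpoint estimate for part (i) are the delicate points; the localization in $\la$ and the limits $s_i\uparrow\dim_H(K_i)$, $s\uparrow\m$ are routine.
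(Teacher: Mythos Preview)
Your proposal addresses the paper's Theorem~1.1 rather than the classical Marstrand--Kaufman--Mattila statement quoted (which the paper only cites as background); I compare it to the paper's proof of Theorem~1.1.

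Your route is genuinely different. The paper works on the Fourier side and, crucially, does \emph{not} localize $t_i$: it integrates over all of $\Lambda$ against the weight $\rho(\la)=\prod_i|t_i|^{m_i-1}e^{-|t_i|^2/2}$, so that the $(t_i,O_i)$-average of $e^{i\eta\cdot t_iO_iz^i}$ is the exact Gaussian $e^{-|z^i|^2|\eta|^2/2}$. This collapses the whole $\Lambda$-average of $|\widehat{\nu_\la}(\xi)|^2$ to the explicit integral $\int_{\R^k}|\xi|^{\m-k}e^{-\frac12|D_{x,y}\xi|^2}\,d\xi$, which is then bounded by $\prod_i|z^i|^{-d_i'}$ via linear changes of variable indexed by the systems $J$ produced by the Weights Lemma. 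You instead localize $t_i$ to an annulus and attack the spatial kernel $|\pi_\la(x)-\pi_\la(y)|^{-s}$ directly through an anti-concentration bound for $\sum_i t_i\pi_i(O_iz^i)$, by induction on blocks. Your exponent bookkeeping via the polymatroid intersection theorem is essentially the dual formulation of the paper's Weights Lemma: both describe the same polyhedron $P$ of admissible exponent vectors, the paper by characterizing its vertices $\ha J(i)$ and writing $d'$ as a convex combination of them, you by invoking the min--max value on $P$ intersected with a box. The paper's Gaussian trick buys a two-line replacement for your transversality estimate; your approach is more geometric and Fourier-free, but the displayed inequality (with only $dO_i$, no $dt_i$) is false as written when some $m_i=1$ or when $\pi_i$ is injective (then $\pi_i(O_iz^i)$ lives on a lower-dimensional sphere), and even with the $t_i$-average included you correctly flag the uniformity in the offset $p$ and the endpoint $\sum a_i=k$ as real work. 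So your outline is sound and would succeed once that estimate is established, but the paper's device sidesteps precisely the step you identify as the main obstacle.
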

This theorem was first proven by Marstrand \cite{Marstrand_1954} in 1954 for planar sets. Marstrand's proof used geometric methods. Later, Kaufman \cite{Kaufman_1968} gave an alternative proof of the same result using potential-theoretic methods. Finally, Mattila \cite{Mattila_1975} generalized it to higher dimensions; his proof combined the methods of Marstrand and Kaufman.

There are other variants of the Marstrand-Mattila's theorem. They were unified in a more general result due to Peres and Schlag \cite{Peres_Schlag_2000}. They studied general smooth families of projections, using some methods from harmonic analysis. The crucial characteristic that is common to all families of projections considered in Peres-Schlag's result is a transversality property (see \cite{Peres_Schlag_2000}, Definition 7.2).

We are interested in a Marstrand's projection result that actually is outside of the Peres-Schlag's scheme (the families of projections considered here, in general, are not transversal). This result was motivated by the problem of understanding the behavior of projections of cartesian products of sets product of sets, by a fixed projection map.

Let $K_1,\ppp,K_n$ Borel subsets of $\R^{m_1},\ppp,\R^{m_n}$ respectively, and  $\pi:\R^{m_1}\times\ppp\times\R^{m_n}\to\R^k$ be a linear map. Then
\begin{equation}\label{2^_conditions}
\dim_H(\pi(K_1\times\ppp\times K_n))\leq\min\set{\sum_{i\in I}\dim_H(K_i) + \dim\pi(\bigoplus_{i\in I^c}\R^{m_i}), I\subset\set{1,\ppp,n}},
\end{equation}
with the conventions $\sum_{i\in\emptyset}\dim_H(K_i)=0, \dim\emptyset =0$.

Consider the space $\Lambda_m=\set{(t,O), t\in\R, O\in SO(m)}$ with the natural measure and set $\Lambda=\Lambda_{m_1}\times\ppp\times\Lambda_{m_n}$. For every $x=(x^1,\ppp,x^n)\in\R^{m_1}\times\ppp\times\R^{m_n}$ and every  $\la=(t_1,O_1,\ppp,t_n,O_n)\in\Lambda$ we define $\pi_\la(x)=\pi(t_1O_1x^1,\ppp,t_nO_nx^n)$. Suppose that $\pi$ is surjective and set $$\mathfrak{m}:=\min\set{\sum_{i\in I}\dim_H(K_i) + \dim\pi(\bigoplus_{i\in I^c}\R^{m_i}), I\subset\set{1,\ppp,n}, I\ne\emptyset}.$$
Then we have

\begin{thm}\label{Marstrand_theorem}

\emph{(i)} If $\mathfrak{m}>k$, then $\pi_\la(K_1\times\ppp\times K_n)$ has positive $k$-dimensional Lebesgue measure for almost every $\la\in\Lambda$.

\emph{(ii)} If $\mathfrak{m}\leq k$ and $\dim_H(K_1\times\ppp\times K_n)=\dim_H(K_1)+\ppp+\dim_H(K_n)$, then $\dim_H(\pi_\la(K_1\times\ppp\times K_n))=\mathfrak{m}$ for almost every $\la\in\Lambda$.
\end{thm}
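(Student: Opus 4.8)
The plan is to treat both parts by a single potential–theoretic computation carried out on the Fourier side, the point being that $\pi_\la$ acts on each factor $\R^{m_i}$ independently, so that the transforms of the relevant measures factorize. Since the conclusions are almost–everywhere statements over the infinite–measure space $\Lambda$, I first reduce to a compact parameter box: it suffices to prove each assertion for a.e. $\la$ in an arbitrary $B=\prod_{i=1}^n\paren{[-T,T]\times SO(m_i)}$ and then exhaust $\Lambda$ as $T\to\infty$, the degenerate set $\set{t_i=0\text{ for some }i}$ being null. Write $\pi(x)=\sum_{i=1}^n A_ix^i$ with $A_i\colon\R^{m_i}\to\R^k$ linear, let $A_i^*$ be its adjoint, $W_i:=A_i(\R^{m_i})$ and $P_{W_i}$ the orthogonal projection onto $W_i$; note $\dim\pi\paren{\bigoplus_{i\in I^c}\R^{m_i}}=\dim\paren{\sum_{i\in I^c}W_i}$, that $\sum_iW_i=\R^k$ by surjectivity, and that $\abs{A_i^*\xi}\approx\abs{P_{W_i}\xi}$ since $\ker A_i^*=W_i^\perp$. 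For each $s_i<\dim_H(K_i)$ I choose (Frostman's lemma on a compact subset of $K_i$) a measure $\mu_i$ carried by $K_i$ with $\mu_i(B(x,r))\le Cr^{s_i}$, and set $\mu:=\mu_1\times\ppp\times\mu_n$, $\nu_\la:=(\pi_\la)_*\mu$. Using $O_i^{-1}=O_i^*$ one gets the structural identity $\ha{\nu_\la}(\xi)=\prod_{i=1}^n\ha{\mu_i}\paren{t_iO_i^{-1}A_i^*\xi}$, which drives everything.

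For part (i) I will show that $\nu_\la\in L^2(\R^k)$ for a.e. $\la\in B$; since $\nu_\la$ is carried by $\pi_\la(K_1\times\ppp\times K_n)$ this forces the latter to have positive Lebesgue measure. By Plancherel, the factorization and Fubini,
\[\int_B\norm{\nu_\la}_2^2\,d\la=\int_{\R^k}\prod_{i=1}^n\paren{\int_{-T}^{T}\int_{SO(m_i)}\abs{\ha{\mu_i}(t_iO_i^{-1}A_i^*\xi)}^2\,dO_i\,dt_i}\,d\xi.\]
For fixed $i$ the inner integral depends only on $R_i=\abs{A_i^*\xi}$: averaging over $SO(m_i)$ turns $\abs{\ha{\mu_i}}^2$ into its spherical average $g_i(\rho)=\int_{S^{m_i-1}}\abs{\ha{\mu_i}(\rho\eta)}^2\,d\sigma(\eta)$, so it equals $G_i(R_i):=\tfrac1{R_i}\int_0^{TR_i}g_i(\rho)\,d\rho$ up to a constant. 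The Fourier representation of energy, $\int_0^\infty g_i(\rho)\rho^{s_i-1}\,d\rho\approx I_{s_i}(\mu_i)<\infty$, together with $g_i\le\mu_i(\R^{m_i})^2$, yields $G_i(R)\le C(1+R)^{-s_i}$. Hence, using $\abs{A_i^*\xi}\approx\abs{P_{W_i}\xi}$, the matter reduces to the convergence of $\int_{\R^k}\prod_{i=1}^n\paren{1+\abs{P_{W_i}\xi}}^{-s_i}\,d\xi$.

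The combinatorial heart is the following: for $0\le\alpha<k$,
\[\int_{\R^k}\abs{\xi}^{-\alpha}\prod_{i=1}^n\paren{1+\abs{P_{W_i}\xi}}^{-s_i}\,d\xi<\infty\iff\sum_{i\in I}s_i+\dim\paren{\sum_{i\in I^c}W_i}>k-\alpha\ \text{ for all }\emptyset\ne I.\]
Granting this with $\alpha=0$, part (i) follows: when $\m>k$ one picks $s_i<\dim_H(K_i)$ close enough to $\dim_H(K_i)$ that the minimum over nonempty $I$ of $\sum_{i\in I}s_i+\dim(\sum_{i\in I^c}W_i)$ still exceeds $k$ (finitely many strict inequalities), so $\int_B\norm{\nu_\la}_2^2\,d\la<\infty$. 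For part (ii), the upper bound $\dim_H\pi_\la(K_1\times\ppp\times K_n)\le\m$ holds for every admissible $\la$: the maps $t_iO_i$ preserve Hausdorff dimension and the additivity hypothesis passes to every sub-product, so \eqref{2^_conditions} applied to $\pi(\prod_it_iO_iK_i)$ gives $\min(k,\m)=\m$. For the matching lower bound I estimate, for $s<\m$, the Riesz energy $I_s(\nu_\la)=c_{k,s}\int_{\R^k}\abs{\ha{\nu_\la}(\xi)}^2\abs{\xi}^{s-k}\,d\xi$; integrating in $\la$ and repeating the spherical–average reduction gives $\int_BI_s(\nu_\la)\,d\la\le C\int_{\R^k}\abs{\xi}^{s-k}\prod_i(1+\abs{P_{W_i}\xi})^{-s_i}\,d\xi$, which is the displayed integral with $\alpha=k-s$. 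The criterion then reads $\sum_{i\in I}s_i+\dim(\sum_{i\in I^c}W_i)>s$ for all nonempty $I$, and since $s<\m$ one again takes $s_i\uparrow\dim_H(K_i)$ to satisfy it. Thus $I_s(\nu_\la)<\infty$ for a.e. $\la$, giving $\dim_H\pi_\la(\cdots)\ge s$; letting $s\uparrow\m$ along a sequence yields the lower bound and hence equality.

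The principal obstacle is proving the combinatorial convergence criterion above, and it is exactly here that the non-transversality of the family is bypassed: the answer is governed by the linear-algebraic arrangement of the subspaces $W_i$ rather than by any transversality of the $\pi_\la$. The mechanism is that the only ways the integral can diverge correspond to escaping to infinity inside a bounded–radius tube around some $(\sum_{i\in I^c}W_i)^\perp$, which contributes the obstruction $\sum_{i\in I}s_i+\dim(\sum_{i\in I^c}W_i)>k-\alpha$; the work is to show, via a dyadic decomposition of $\R^k$ adapted to the $W_i$ and a careful accounting of overlaps, that these subset-indexed obstructions are the only ones and that their minimum reproduces $\m$ exactly. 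Once this is established, the spherical-average estimates and the factorization reduce both parts of the theorem to it.
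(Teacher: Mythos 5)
Your overall architecture (Fourier factorization $\ha{\nu_\la}(\xi)=\prod_i\ha{\mu_i}(t_iO_i^{-1}A_i^*\xi)$, energy integration in $\la$, upper bound in (ii) from inequality (\ref{2^_conditions}), then $s\uparrow\m$) parallels the paper, but there are two genuine gaps. First, the spherical-average estimate $G_i(R)\leq C(1+R)^{-s_i}$ is false when $s_i>1$. After averaging over $SO(m_i)$ you get $g_i(\abs{t_i}R_i)$, and the unweighted $t_i$-integral produces $G_i(R)=R^{-1}\int_0^{TR}g_i(\rho)\,d\rho$; from $\int_0^\infty g_i(\rho)\rho^{s_i-1}\,d\rho\approx I_{s_i}(\mu_i)<\infty$ the bound $G_i(R)\lesssim R^{-s_i}$ follows only for $s_i\leq1$ (bounding $\rho^{1-s_i}$ by $(TR)^{1-s_i}$ requires $1-s_i\geq0$). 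For $s_i>1$ the hypothesis forces $g_i$ to be integrable on $(0,\infty)$, so generically $G_i(R)\asymp R^{-1}$ and no better — e.g.\ $\mu_i$ the surface measure on a sphere in $\R^{m_i}$, $m_i\geq3$, where $g_i(\rho)\asymp\rho^{-(m_i-1)}$. Thus your one-parameter scaling integral gains only one power, your scheme delivers exponents $\min(s_i,1)$, and part (i) breaks whenever some $\dim_H(K_i)>1$. This is precisely why the paper inserts the weight $\rho(\la)$ containing $\abs{t_i}^{m_i-1}e^{-\abs{t_i}^2/2}$: it converts the $(t_i,O_i)$-average into a genuine $m_i$-dimensional (Gaussian) average, via $\int_\R\int_{SO(m)}e^{i\eta\cdot tOz}\abs{t}^{m-1}e^{-\frac12\abs{t}^2}\,d\Theta^m\,dt=2\pi^{m/2}e^{-\frac12(\abs{z}\abs{\eta})^2}$, and an $m_i$-dimensional average of $\abs{\ha{\mu_i}}^2$ does decay like $R^{-s_i}$ for all $s_i\leq m_i$. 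The repair costs nothing for almost-everywhere conclusions since the weight is positive a.e., but without it the central estimate is wrong.

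Second, your ``combinatorial convergence criterion'' is asserted, not proved, and its sufficiency direction is the entire content of the theorem. The necessity direction (divergence along bounded tubes around $\bigl(\sum_{i\in I^c}W_i\bigr)^\perp$) is easy, as you note; but the claim that these subset-indexed obstructions are the \emph{only} ones is exactly the paper's Weights Lemma (Lemma \ref{Existence_of_weights}), whose proof is a nontrivial piece of polyhedral combinatorics: one shows that every vertex of the polyhedron $P$ cut out by the $2^n$ constraints $\sum_{i\in I}d_i+\dim\bigl(\sum_{i\in I^c}V_i\bigr)\geq s$ has the form $\ha{J}(i)$ for a linearly independent subfamily of the vectors $\pi(e^i_j)$, the key step being that the family $\mathcal{I}$ of tight constraints is closed under unions and intersections (via $\dim(V_{I^c}\cap V_{J^c})+\dim(V_{I^c}+V_{J^c})\leq\dim V_{I^c}+\dim V_{J^c}$), so a minimal tight set is a singleton and induction applies; writing $d$ as a convex combination of such vertices plus a nonnegative ray then turns the Gaussian (in your version, power-weight) integral into the product bound $z^{-d'}$ with $d'\leq d$. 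A ``dyadic decomposition adapted to the $W_i$ with careful accounting of overlaps'' cannot be waved through: precisely because the subspaces $\sum_{i\in I^c}W_i$ may sit in non-transversal position, the regions where several $\abs{P_{W_i}\xi}$ are simultaneously small interact, and disentangling them is equivalent to the polyhedral statement above. As written, your proposal reduces Theorem \ref{Marstrand_theorem} to an unproven lemma carrying the paper's whole difficulty. (A minor further point: in part (ii) you should handle factors with $\dim_H(K_i)=0$, for which no positive Frostman exponent exists; the paper reduces to a sub-product in that case.)
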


In a work in progress, we plan use the Theorem \ref{Key_Theorem} to generalize the result of Moreira and Yoccoz \cite{Moreira_Yoccoz_2001} about stable intersections of two regular Cantor sets for projections of cartesian products of several regular Cantor sets. Our goal is to prove the following result: for any given surjective linear map $\pi:\R^{n}\to\R^k$, typically for regular Cantor sets on the real line $K_1,\ppp,K_n$ with $\mathfrak{m}>k$, the set $\pi(K_1\times\ppp\times K_n)$ persistently contains non-empty open sets of $\R^k$. Such a result would in particular imply an analogous result for simultaneous stable intersections of several regular Cantor sets on the real line.

In another work in progress, in collaboration with Pablo Shmerkin, we plan to use the results of this paper combined with the techniques in \cite{Hochman_Shmerkin_2009} in order to obtain exact formulas for the Hausdorff dimensions of projections of cartesian products of (real or complex) regular Cantor sets under explicit irrationality conditions.
\vskip .1 in
{\bf Acknowledgement:} We are grateful to P. Shmerkin for the useful discussions about the subject of this work.
\section{Statement the main results}
Let $\mu$ be a finite Borel measure on $\R^m$. The \emph{$s$-energy} of $\mu$ is $$I_s(\mu)=\int\int\frac{d\mu(x)d\mu(y)}{\abs{x-y}^s}.$$
We know (see \cite{Mattila_1995}, Theorem 8.9(3)) that for a Borel set $K\subset\R^m$
\begin{multline}\label{Frostman_Lemma}
\dim_H(K)=\sup\{s\in\R, \textrm{there is a compactly supported measure } \mu
\textrm{ on } K \\\textrm{ which } 0<\mu(\R^m)<\infty \textrm{ and } I_s(\mu)<\infty\}.
\end{multline}
The \emph{Fourier transform} of $\mu$ is denoted by $\ha{\mu}$ and defined as
 $$\ha{\mu}(\xi)=\int_{\R^m}e^{-i\xi\cdot x}d\mu(x).$$
It is well-know that if $\ha{\mu}\in L^2(\R^m)$, then $\mu$ is absolutely continuous with $L^2$-density.
Energy and Fourier transform are related as follow (see \cite{Mattila_1995}, Lemma 12.12)
$$I_s(\mu)=(2\pi)^{-m}c(s,m)\int\abs{\xi}^{s-m}\abs{\ha{\mu}(\xi)}^2d\xi,$$
for $0<s<m$ and $\mu$ with compact support.

We summarize the above observations as the following result:\\
Let  $F\subset\R^k$ a Borel set supporting a probability measure $\nu$ with $\int\abs{\xi}^{s-k}\abs{\ha{\nu}(\xi)}^2d\xi<\infty$. If $s\geq k$, then $F$ has positive $k$-dimensional Lebesgue measure. Otherwise, if $0<s<k$, then $\dim_H(F)\geq s$.

Let $\pi:\R^{m_1}\times\ppp\times\R^{m_n}\to\R^k$ be a linear map. For each $I\subset\set{1,\ppp,n}$, let $P_{I}:\R^{m_1}\times\ppp\times\R^{m_n}\to\R^{m_1}\times\ppp\times\R^{m_n}$ the orthogonal projection onto the subspace $\bigoplus_{i\in I}\R^{m_i}$, where $\R^{m_i}$ is as a canonical subspace of $\R^{m_1}\times\ppp\times\R^{m_n}$. Then $\pi=\pi\circ P_I+\pi\circ P_{I^c}$ so, for $K_1,\ppp,K_n$ Borel subsets of $\R^{m_1},\ppp,\R^{m_n}$ respectively we have
\begin{align*}
&\dim_H(\pi(K_1\times\ppp\times K_n))\\
&\leq \dim_H\Big(\pi P_I(K_1\times\ppp\times K_n)\times\pi P_{I^c}(K_1\times\ppp\times K_n)\Big)\\
&\leq \dim_H\Big(\pi P_I(K_1\times\ppp\times K_n)\times\pi(\bigoplus_{i\in I^c}\R^{m_i})\Big)\\
&\leq\sum_{i\in I}\dim_H(K_i)+\dim\pi(\bigoplus_{i\in I^c}\R^{m_i}).
\end{align*}
(In the last inequality, we assume that $\dim_H(K_1\times\ppp\times K_n)=\dim_H(K_1)+\ppp+\dim_H(K_n)$) This prove the inequality (\ref{2^_conditions}) and also motivates us to define:

\begin{defn}\label{definition_m}
For $\pi:\R^{m_1}\times\ppp\times\R^{m_n}\to\R^k$ a surjective linear map and $d_1,\ppp,d_n$ nonnegative real numbers, we define $\mathfrak{m}=\mathfrak{m}(\pi,d_1,\ppp, d_n)$ as
$$\mathfrak{m}=\min\set{\sum_{i\in I}d_i + \dim\pi(\bigoplus_{i\in I^c}\R^{m_i}), I\subset\set{1,\ppp,n}, I\ne\emptyset}.$$
\end{defn}

\begin{rem}
If in addition $d_i\leq m_i$ (which holds for dimensions of subsets of $\R^{m_i}$), then, for the open and total measure family of linear maps $\pi$ with the following transversality property:
$$\dim\pi(\bigoplus_{i\in I}\R^{m_i})=\min\big(k,\dim(\bigoplus_{i\in I}\R^{m_i})\big),\textrm{ for all }I\subset\set{1,\ppp,n},$$
the equivalence $\mathfrak{m}(\pi,d_1,\ppp,d_n)>k \Leftrightarrow d_1+\ppp+d_n>k$ holds. However, in general we must check more than one of the $2^n-1$ conditions appearing in the definition of $\mathfrak{m}$.
\end{rem}

Consider the space $\Lambda_m=\set{(t,O), t\in\R, O\in SO(m)}$, with the product measure $\mathcal{L}^1\times\Theta^m$, where $\ca{L}^1$ denotes the one dimensional Lebesgue measure and $\Theta^m$ denotes the left-right invariant Haar probability measure on $SO(m)$. Notice that the set $C(m)=\set{tO, t\in\R, O\in SO(m)}$ represents essentially the family of linear conformal maps on $\R^m$. $C(2)=\set{\left( \begin{array}{cc} a &-b \\ b & a \\ \end{array} \right), a,b\in\R}$, which can be viewed as the set of multiplications by a complex number.

We set $\Lambda=\Lambda_{m_1}\times\ppp\times\Lambda_{m_n}$. For every $x=(x^1,\ppp,x^n)\in\R^{m_1}\times\ppp\times\R^{m_n}$, and every  $\la=(t_1,O_1,\ppp,t_n,O_n)\in\Lambda$ we define $\pi_\la(x)=\pi(t_1O_1x^1,\ppp,t_nO_nx^n)$.
Also, given any finite measure $\mu$ on $\R^{m_1}\times\ppp\times\R^{m_n}$, let $\nu_\la=(\pi_\la)_*\mu$. We also  define
$$I_{d_1,\ppp,d_n}(\mu)=\int\int\frac{d\mu(x)d\mu(y)}{\abs{x^1-y^1}^{d_1}\ppp\abs{x^n-y^n}^{d_n}}.$$
Our main result is now the following:

\begin{thm}\label{Key_Theorem} Let $\pi$ and $d_1,\ppp,d_n$ be as in definition \ref{definition_m} with $\m=\m(\pi, d_1,\ppp,d_n)\neq 0,1,\ppp,k-1$. Then, there exist $d'_1\leq d_1,\ppp,d'_n\leq d_n$ such that for every Borel measure $\mu$ on $\R^{m_1}\times\ppp\times\R^{m_n}$ we have
$$\int_{\Lambda}\int_{\R^k}\abs{\xi}^{\mathfrak{m}-k}\abs{\ha{\nu_\la}(\xi)}^2\rho(\la)d\xi d\la\leq C_{\mathfrak{m}}I_{d'_1,\ppp,d'_n}(\mu),$$
where $\rho(\la)=|t_1|^{m_1-1}\ppp|t_n|^{m_n-1}e^{-\inv{2}(|t_1|^2+\ppp+|t_n|^2)}$ and $C_{\mathfrak{m}}>0$ is some constant depending only on $\pi, n, k, m_1,\ppp,m_n$ and $\mathfrak{m}$.
\end{thm}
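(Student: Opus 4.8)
The plan is to carry out the $\la$-integration explicitly and thereby reduce the whole left-hand side to a single deterministic kernel integral. Writing $\pi_j\colon\R^{m_j}\to\R^k$ for the blocks of $\pi$ and $\pi_j^*$ for their adjoints, one has $\xi\cdot\pi_\la(x)=\sum_j t_j(\pi_j^*\xi)\cdot O_jx^j$, so that $\abs{\ha{\nu_\la}(\xi)}^2=\int\int\exp\paren{-i\sum_j t_j(\pi_j^*\xi)\cdot O_j(x^j-y^j)}\,d\mu(x)\,d\mu(y)$. Integrating first in $\la$ by Fubini, the weight $\rho$ and the phase both factor over $j$, and each factor I would compute by pushing the measure $\abs{t_j}^{m_j-1}e^{-t_j^2/2}\,dt_j\,d\Theta^{m_j}(O_j)$ forward under $(t_j,O_j)\mapsto t_jO_jz^j$, where $z^j=x^j-y^j$. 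A polar-coordinate computation shows this push-forward has density proportional to $\abs{z^j}^{-m_j}e^{-\abs{w}^2/(2\abs{z^j}^2)}$ on $\R^{m_j}$, so the $j$-th factor is the Fourier transform of a Gaussian, namely $C_j\,e^{-\frac12\abs{z^j}^2\abs{\pi_j^*\xi}^2}$. This is the structural reason for the precise shape of $\rho$: the average of the oscillatory phase over $(t,O)$ is exactly a Gaussian in $\xi$. Undoing Fubini reduces the left-hand side to $C_0\int\int J(x-y)\,d\mu(x)\,d\mu(y)$, where \[ J(z)=\int_{\R^k}\abs{\xi}^{\m-k}\exp\paren{-\tfrac12\sum_j\abs{z^j}^2\abs{\pi_j^*\xi}^2}\,d\xi, \] and the theorem will follow once I prove the pointwise kernel bound $J(z)\le C\prod_j\abs{z^j}^{-d_j'}$ for a suitable choice of exponents $d_j'\le d_j$.

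To estimate $J$, set $a_j=\abs{z^j}$, $A_j=\pi_j\pi_j^*$ and $M(a)=\sum_j a_j^2A_j$, a positive-definite $k\times k$ form (positive definite because $\pi$ is surjective, whence $\bigcap_j\ker A_j=0$; the diagonal sets $z^j=0$ are harmless, as there both sides are infinite once $d_j'>0$). Let $\mu_1\le\cdots\le\mu_k$ be its eigenvalues and $\beta_i=\mu_i^{-1/2}$ the semi-axes of the ellipsoid $\set{\langle M(a)\xi,\xi\rangle\le1}$. The key geometric input is the location of these scales: ordering $a_{\sigma(1)}\ge\cdots\ge a_{\sigma(n)}$ and writing $V_\ell=\pi(\bigoplus_{r\le\ell}\R^{m_{\sigma(r)}})$ for the associated flag, a min--max argument using that each $\sum_{i\in I}A_i$ is uniformly positive definite on its range $W_I=\pi(\bigoplus_{i\in I}\R^{m_i})$ shows that $M(a)$ has exactly $\dim V_\ell-\dim V_{\ell-1}$ eigenvalues of order $a_{\sigma(\ell)}^2$.

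With these scales in hand I would dyadically decompose $\R^k$ into shells $\abs{\xi}\sim R$: on each shell $\abs{\xi}^{\m-k}\sim R^{\m-k}$ and the Gaussian integral is comparable to $\prod_i\min(R,\beta_i)$, the volume of the shell truncated to the ellipsoid, giving $J\lesssim\sum_{R\ \mathrm{dyadic}}R^{\m-k}\prod_i\min(R,\beta_i)$. On each range between consecutive semi-axes the summand behaves like $R^{\m-k+p}$, with $p=\#\set{i:\beta_i\ge R}$, a geometric progression in the dyadic scale that is dominated by its endpoint unless the exponent $\m-k+p$ vanishes, i.e. unless $\m=k-p\in\set{0,1,\ldots,k-1}$. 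This is exactly the case excluded by the hypothesis $\m\neq0,1,\ldots,k-1$, which therefore rules out a logarithmic loss at every scale transition (and $\m>0$ secures convergence as $R\to0$). Evaluating the surviving endpoint terms through the eigenvalue count above, the bound at the transition between a block $I^c$ and a block $I$ takes the form $a_{\sigma(\ell)}^{-(\m-\dim W_{I^c})}\prod_{r<\ell}a_{\sigma(r)}^{-(\dim V_r-\dim V_{r-1})}$. Comparing all such terms, over every ordering, with the target $\prod_j a_j^{-d_j'}$ reduces the whole matter to a combinatorial feasibility problem: find $0\le d_j'\le d_j$ with $\sum_j d_j'=\m$ and $\sum_{i\in I}d_i'\ge\m-\dim W_{I^c}$ for every nonempty $I$. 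I expect this selection of the exponents to be the heart of the argument, and it is precisely where the defining property $\m=\min_{I\ne\emptyset}\m_I$ is used: since $\m\le\m_I=\sum_{i\in I}d_i+\dim W_{I^c}$, the required lower bounds are compatible with the caps $d_i'\le d_i$, and the submodularity of $I\mapsto\dim W_I$ makes the system solvable by a base-of-a-polymatroid argument.

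Finally, substituting the kernel bound $J(z)\le C\prod_j\abs{z^j}^{-d_j'}$ into the reduced expression yields $C_0\int\int J(x-y)\,d\mu(x)\,d\mu(y)\le C_0C\int\int\prod_j\abs{x^j-y^j}^{-d_j'}\,d\mu(x)\,d\mu(y)=C_\m\,I_{d_1',\ldots,d_n'}(\mu)$ with $C_\m=C_0C$, which is the assertion. The only genuinely delicate points are the uniform, scale-by-scale control of $J$ (where the exclusion of the integer values of $\m$ prevents logarithmic blow-up) and the combinatorial construction of the $d_j'$, both of which hinge on the minimality built into the definition of $\m$.
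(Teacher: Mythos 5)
Your first half coincides with the paper's: the explicit $\la$-integration against $\rho$, the Gaussian identity for the push-forward of $\abs{t}^{m-1}e^{-t^2/2}\,dt\,d\Theta^m$ under $(t,O)\mapsto tOz$, and the reduction to the kernel $J(z)=\int_{\R^k}\abs{\xi}^{\m-k}e^{-\frac12\sum_j\abs{z^j}^2\abs{\pi_j^*\xi}^2}d\xi$ are exactly the paper's opening moves. The second half is genuinely different. The paper never analyzes the spectrum of $M(a)=\sum_j a_j^2\pi_j\pi_j^*$: for each tuple $J\in\mathbb{J}$ of coordinate images forming a linearly independent system it bounds the Gaussian integral by a linear change of variables, obtaining kernels $z^{-\ha{J}}$ (with an extra factor $z_{i_0}^{k-\m}$ when $\m\geq k$, or $z_i^{k'-\m}$ when $k'-1<\m<k'$), and then combines these finitely many bounds by a weighted geometric mean whose weights come from the Weights Lemma, which identifies the vertices of the polyhedron $P=\set{d\geq0:\sum_{i\in I}d_i+\dim W_{I^c}\geq s\ \forall I}$ as the vectors $\ha{J}(i)$. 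Your route --- eigenvalue localization along the flag $V_\ell$ via min--max, dyadic shells, majorization over sorted $a$, and a polymatroid feasibility problem --- is sound, and your feasibility system ($0\leq d'\leq d$, $\sum_i d'_i=\m$, $\sum_{i\in I}d'_i\geq\m-\dim W_{I^c}$ for $I\neq\emptyset$) is precisely membership in the paper's polyhedron $P$ at total mass $\m$, so the two combinatorial cores are equivalent, both resting on submodularity of $I\mapsto\dim W_I$. Your version buys a more transparent explanation of the hypothesis $\m\neq0,1,\ppp,k-1$ (it is exactly what forbids logarithmic losses at scale transitions) and yields a single $d'$ uniform in $(x,y)$, whereas the paper's case $\m\geq k$ produces exponents depending on the argmin coordinate $i_0$; the paper's version avoids all spectral estimates, needing only linear substitutions, and its lemma gives the stronger vertex description $P=\textrm{conv.hull}\{\ha{J}(i)\}+\textrm{cone}\set{e_1,\ppp,e_n}$.

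One step as written would fail and needs a (routine) repair: your dyadic majorant $\sum_R R^{\m-k}\prod_i\min(R,\beta_i)$ diverges over the top window $R>\beta_{\max}$ whenever $\m\geq k$ --- for $\m>k$ the terms grow geometrically, and for $\m=k$ (which your hypothesis permits) they are constant --- and this is precisely the case needed for part (i) of the Marstrand theorem. The bound $\prod_i\min(R,\beta_i)$ for the Gaussian mass of a shell is only an upper bound, and it is hopelessly lossy beyond the largest semi-axis, where the true shell integral decays like $e^{-cR^2/\beta_{\max}^2}$; you acknowledge convergence as $R\to0$ (from $\m>0$) but not the divergence as $R\to\infty$. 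Inserting the exponential tail shows the region $R>\beta_{\max}$ contributes only its endpoint value $\sim\beta_{\max}^{\m-k}\prod_i\beta_i$, after which your endpoint accounting and the Abel-summation comparison go through as claimed. A second, smaller point: the standard uncrossing argument for your feasibility system (tight sets closed under union) has a wrinkle when $\m>k$, since two tight sets with empty intersection are not ruled out by the constraint family (the ``constraint'' at $\emptyset$ would read $0\geq\m-k$); this is fixable, and in any case feasibility follows from the paper's Weights Lemma, but your sketch glosses over it.
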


In the proof of Theorem \ref{Key_Theorem} the key tool will be the following combinatorial lemma.

\begin{lem}[Weights Lemma]\label{Existence_of_weights}
Let $s,d_1,\ppp,d_n\geq0$ and $V_1,\ppp,V_n$ vector subspaces of a same finite dimension vector space satisfying the following $2^n$ conditions
$$\sum_{i\in I}d_i + \dim \big(\sum_{i\in I^c}V_i\big)\geq  s, \textrm{ for every }I\subset\set{1,\ppp,n}$$
(with the conventions $\sum_{i\in\emptyset}d_i=0$, $\dim\emptyset=0$).

Fixed a generating set $\set{v^i_{1},\ppp,v^i_{m_i}}$ of $V_i$ for each $i\in\set{1,\ppp,n}$. Consider the family $\mathbb{J}$ of all possible  $J=(\textsc{j}_1,\ppp,\textsc{j}_n)$, $\textsc{j}_i\subset\set{v^i_{1},\ppp,v^i_{m_i}}$ such that $\textsc{j}_1\cup\ppp\cup\textsc{j}_n$ is a linearly independent system with dimension greater than or equal to $s$. Define
$$\mathbb{\ov{J}}=\set{(J,i)\in\mathbb{J}\times\set{1,\ppp,n}, \ha{J}(i):=(\#\textsc{j}_1,\ppp,\#\textsc{j}_n)+(s-(\#\textsc{j}_1+\ppp+\#\textsc{j}_n))e_i\geq0},$$
where $e_1,\ppp,e_n$ is the canonical basis of $\R^n$ and $\geq$ means that the inequality is coordinate to coordinate.

Then, there exist non-negative real numbers $(\alpha_{(J,i)})_{(J,i)\in\mathbb{\ov{J}}}$ with sum equal to $1$ such that
$$\sum_{(J,i)\in\mathbb{J}}\alpha_{(J,i)}\ha{J}(i)\leq d.$$
\end{lem}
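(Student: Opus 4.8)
The plan is to recast the conclusion as a convex-geometric incidence and then reduce it, via linear-programming duality, to a single inequality for each weight vector, which will turn out to be exactly one of the $2^n$ hypotheses. Write $d=(d_1,\ppp,d_n)$ and let $r(I)=\dim(\sum_{i\in I}V_i)$ be the rank function of the linear matroid on the generators $\set{v^i_j}$; it is monotone and submodular with $r(\emptyset)=0$. Every $\ha{J}(i)$ with $(J,i)\in\ov{\mathbb{J}}$ has nonnegative coordinates summing to $s$, so the asserted conclusion is precisely the statement that the compact convex set $C=\mathrm{conv}\set{\ha{J}(i):(J,i)\in\ov{\mathbb{J}}}$ meets the region $\set{x\in\R^n:x\leq d}$ (nonnegativity $x\geq0$ being automatic, and $d$ being the vector $(d_1,\ppp,d_n)$ read coordinatewise).

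First I would argue by contradiction using a separation argument. If $C$ does not meet $\set{x\leq d}$, a separating hyperplane yields $p\in\R^n$ with $\langle p,x\rangle>\langle p,y\rangle$ for all $x\in C$ and all $y\leq d$; since $\set{y\leq d}$ is unbounded in each direction $-e_j$, boundedness of $\langle p,\cdot\rangle$ on it forces $p\geq0$, whence $\sup_{y\leq d}\langle p,y\rangle=\langle p,d\rangle$. Normalizing so that $p_1+\ppp+p_n=1$, it thus suffices to prove the dual statement: for every probability vector $p$ there exists $(J,i)\in\ov{\mathbb{J}}$ with $\langle p,\ha{J}(i)\rangle\leq\langle p,d\rangle$.

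To establish the dual statement I would relabel the blocks so that $p_1\geq p_2\geq\ppp\geq p_n\geq0$. A summation-by-parts computation, using that the increments $p_k-p_{k+1}$ (with $p_{n+1}:=0$) and $p_n$ are nonnegative, shows that $\langle p,\ha{J}(i)\rangle\leq\langle p,d\rangle$ follows once the prefix inequalities $\sum_{j=1}^k\ha{J}(i)_j\leq\sum_{j=1}^k d_j$ hold for all $k$. I would then produce one good pair greedily, concentrating the independent system on the blocks of small weight: choose (greedily, from high index down) a basis of $\sum_{j>k^*}V_j$ drawn from the generators, extend it by generators of $V_{k^*}$ up to total size $\lceil s\rceil$, and set $i=k^*$, where $k^*$ is the largest index with $r(\set{k^*,\ppp,n})\geq s$ (such a $k^*\geq1$ exists since $r(\set{1,\ppp,n})\geq s$ by the hypothesis for $I=\emptyset$). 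One checks $J\in\mathbb{J}$ and $\sum_{l\neq i}\#\textsc{j}_l=r(\set{k^*+1,\ppp,n})<s$, so $(J,i)\in\ov{\mathbb{J}}$. For this pair the prefix sums telescope: $\sum_{j=1}^k\ha{J}(i)_j=0$ for $k<k^*$ and $\sum_{j=1}^k\ha{J}(i)_j=s-r(\set{k+1,\ppp,n})$ for $k\geq k^*$, so the required prefix inequality reads $s\leq\sum_{j\in\set{1,\ppp,k}}d_j+\dim(\sum_{j\in\set{k+1,\ppp,n}}V_j)$, which is exactly the hypothesis applied to $I=\set{1,\ppp,k}$.

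The routine parts are the separation/duality step and the summation-by-parts reduction; the one place demanding care — and the conceptual heart of the argument — is the greedy construction and the telescoping of its prefix sums, since it is precisely this choice of $(J,i)$ that converts the target inequality into one of the $2^n$ given conditions. I would finally double-check the edge cases $k^*=n$ and $s\in\N$, and confirm via the matroid exchange property that extending the basis of $\sum_{j>k^*}V_j$ by generators of $V_{k^*}$ up to size $\lceil s\rceil\leq r(\set{k^*,\ppp,n})$ is always possible, so that the constructed system is genuinely linearly independent.
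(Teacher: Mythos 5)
Your proposal is correct, but it takes a genuinely different route from the paper. The paper argues primally: it introduces the polyhedron $P=\set{d\in\overline{\R}^n_+,\ \sum_{i\in I}d_i+\dim(\sum_{i\in I^c}V_i)\geq s \textrm{ for all } I}$ and proves by induction on $n$ that every vertex of $P$ has the form $\ha{J}(i)$, via an uncrossing argument --- submodularity of $\dim$ makes the family of tight sets closed under unions and intersections, so a minimal tight set is a singleton, which peels off one coordinate and reduces to $n-1$ --- and then invokes the decomposition of a pointed polyhedron as convex hull of its vertices plus the cone of its extremal rays (rays necessarily nonnegative since $P\subset\overline{\R}^n_+$). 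You argue dually: a separating-hyperplane reduction shows it suffices, for each probability vector $p$, to exhibit one pair $(J,i)\in\ov{\mathbb{J}}$ with $\langle p,\ha{J}(i)\rangle\leq\langle p,d\rangle$, and after sorting $p$ you build that pair by an Edmonds-style greedy construction, with Abel summation converting the prefix inequalities into exactly the hypotheses for $I=\set{1,\ppp,k}$. Your two delicate points check out: greedy selection from high index down guarantees that for every $k\geq k^*$ the chosen vectors in blocks beyond $k$ form a basis of $\sum_{j>k}V_j$, which is what makes the prefix sums telescope to $s-\dim(\sum_{j>k}V_j)$; and the matroid exchange property does permit extending the basis of $\sum_{j>k^*}V_j$ by generators of $V_{k^*}$ alone (any generator from a block beyond $k^*$ lies in the span already), up to size $\lceil s\rceil\leq\dim(\sum_{j\geq k^*}V_j)$. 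In terms of trade-offs: the paper's vertex characterization is stronger, yielding the exact description $P=\textrm{conv.hull}\set{\ha{J}(i)}+\textrm{cone}\set{e_1,\ppp,e_n}$ recorded in its closing remark, whereas your argument produces only the needed convex combination; in exchange, yours is more self-contained (separation plus the greedy/exchange property, with no appeal to polyhedral vertex theory), is essentially the classical greedy-optimality proof for polymatroid-type polyhedra, and constructively establishes $\ov{\mathbb{J}}\neq\emptyset$, a nonemptiness point your separation step tacitly uses and which you would do well to make explicit in a final write-up.
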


\begin{proof}[Proof of Theorem \ref{Marstrand_theorem}]
The theorem follows immediately from the Theorem \ref{Key_Theorem} applied to $\mu=\mu_1\times\ppp\times\mu_n$ for suitable measures $\mu_i$ compactly supported in $K_i$ coming from the equation (\ref{Frostman_Lemma}). Noting that in the part (i), the condition $\dim_H(K_1)>0,\ppp,\dim_H(K_n)>0$ follows from the hypotheses; and in the part (ii), we may assume the same condition by reduction to some cartesian product if necessary.
\end{proof}


\begin{rem}
We can derive the part (ii) of the Theorem \ref{Marstrand_theorem} from the part (i). Assume $\dim_H(K_i)>0$. Let $k'<\m\leq k'+1\leq k$ and consider any $k'<s<\m$, and set $\Lambda^s=\set{\la\in\Lambda,\dim_H(\pi_\la(K_1\times\ppp\times K_n))<s}$. The idea is to add another factor to the cartesian product: Let $m_0:=k-k'$ and consider $K_0$ a sufficiently regular subset of $\R^{m_0}$ with $\dim_H(K_0)=k-s$, and $\widetilde{\pi}:\R^{m_0}\times\R^{m_1}\times\ppp\times\R^{m_n}\to \R^k$ with
\begin{align*}
\widetilde{\pi}\circ P_{I^n}=&\pi, \textrm{ where } I^n=\set{1,\ppp,n},\\
\dim\widetilde{\pi}\big(\bigoplus_{i\in I\cup\set{0}}\R^{m_i})\big)=&\min\big(k,m_0+\dim\pi(\bigoplus_{i\in I}\R^{m_i})\big), \textrm{ for all } I\subset\set{1,\ppp,n}.
\end{align*}
 In particular
$\widetilde{\pi}$ is surjective. Notice that
$$\sum_{i\in I}\dim_H(K_i) + \dim\widetilde{\pi}(\bigoplus_{i\in I^c}\R^{m_i})>k, \textrm{ for all }I\subset\set{0,1,\ppp,n}, I\neq\emptyset,$$
and also that $\dim_H(\nha{\pi}_{(\la_0,\la)}(K_1\times\ppp\times K_n))<k$ for all $(\la_0,\la)\in\Lambda_{m_0}\times\Lambda^s$. Applying the Theorem \ref{Marstrand_theorem}.(i) in this new setting, we conclude that $\Lambda^s$ is a zero measure subset of $\Lambda$.
\end{rem}

\begin{rem}
Theorem \ref{Key_Theorem}, when combined with Proposition 7.5 of \cite{Peres_Schlag_2000}, also gives us a result on exceptional sets:

In the setting of the Theorem \ref{Marstrand_theorem}, part  $(i)$, we have
$$\dim_H\big(\set{\la\in\Lambda, t_i\neq0 \textrm{\emph{ if }} m_i>1, \mathcal{L}^k(\pi_\la(K_1\times\ppp\times K_n))=0}\big)\leq l+k-\mathfrak{m},$$
where $l=\dim\Lambda_{m_1}\times\ppp\times\Lambda_{m_n}=n+\sum_{i=1}^{n}m_i(m_i-1)\22.$
\end{rem}

\section{Proof of the main results}\label{section_proof}

\begin{proof} [Proof of Theorem \ref{Key_Theorem}] Notice that
\begin{align*}
\abs{\ha{\nu_\la}(\xi)}^2&=\int\int e^{i\xi\cdot\pi_\la(y-x)}d\mu(x)d\mu(y),\\
&=\int\int e^{i\pi^{T}\xi\cdot(t_1O_1(y^1-x^1),\ppp,t_nO_n(y^n-x^n))}d\mu(x)d\mu(y),
\end{align*}
and that, for all $z\in\R^m$, $\eta\in\R^m$,
\begin{align*}
\int_\R\int_{SO(m)}e^{i\eta\cdot tOz}|t|^{m-1}e^{-\inv{2}\abs{t}^2}d\Theta^m dt&=
\int_\R\int_{S^{m-1}}e^{i|z|\eta\cdot t\theta}|t|^{m-1}e^{-\inv{2}\abs{t}^2}d\sigma^{m-1} dt\\
&=2\int_{\R^m}e^{i|z|\eta\cdot x}e^{-\inv{2}\abs{x}^2}dx\\
&=2\pi^{\frac{m}{2}} e^{-\inv{2}(|z||\eta|)^2},
\end{align*}
where $\sigma^{m-1}$ denotes the normalized Lebesgue measure on $S^{m-1}$. Therefore by Fubini's theorem
\begin{equation*}
\begin{split} &\int_\Lambda\int_{\R^k}\abs{\xi}^{\mathfrak{m}-k}\abs{\ha{\nu_\la}(\xi)}^2\rho(\la)d\xi d\la\\
&=\lim_{a\to\infty}\int_{\abs{\xi}\leq a}\int_\Lambda\abs{\xi}^{\mathfrak{m}-k}\abs{\ha{\nu_\la}(\xi)}^2\rho(\la)d\la d\xi\\
&=c\lim_{a\to\infty}\int\int\paren{\int_{\abs{\xi}\leq a}\abs{\xi}^{\mathfrak{m}-k}e^{-\inv{2}\abs{D_{x,y}(\xi)}^2}d\xi}d\mu(x)d\mu(y)\\ &= c\int\int\paren{\int_{\R^k}\abs{\xi}^{\mathfrak{m}-k}e^{-\inv{2}\abs{D_{x,y}(\xi)}^2}d\xi}d\mu(x)d\mu(y),\\ \end{split}
\end{equation*}
where $D_{x,y}=\paren{D^1(\abs{y^1-x^1}),\ppp,D^n(\abs{y^n-x^n})}\circ\pi^T$, and $D^i(t):\R^{m_i}\to\R^{m_i}$ is the diagonal transformation, $D^i(t)=t.Id$, for $t\in\R$.

We fixed $x,y$ assuming that $y^i-x^i\neq 0$ for all $i=1,\ppp,n$. We estimate $\int_{\R^k}\abs{\xi}^{\mathfrak{m}-k}e^{-\inv{2}\abs{D_{x,y}(\xi)}^2}d\xi$ separately, when $\m\geq k$ and $\m<k$. In both case we apply the Lemma \ref{Existence_of_weights} for $V_i=\pi(\R^{m_i})$, taking $v_j^i=\pi(e^i_j)$, where $e^{i}_j$, $j=1,\ppp,m_i$ is the canonical basis of $\R^{m_i}$ as subspace of $\R^{m_1}\times\ppp\times\R^{m_n}$.

We use the notation $z^I=(z_1^{i_1},\ppp,z_n^{i_n})$ if $z=(z_1,\ppp,z_n)\in\R_+^n$ and $I=(i_1,\ppp,i_n)\in\N^n$, for $z=(\abs{y^1-x^1},\ppp,\abs{y^n-x^n})$.
\vskip .1 in
Suppose $\m\geq k$. Let $i_0$ such that $z_{i_0}\leq z_{i}$ for all $i=1,\ppp,n$. Notice that $\m(\pi,d-(\m-k)e_{i_0})\geq k$ and in particular $d-(\m-k)e_{i_0}\geq0$. We apply the Lemma \ref{Existence_of_weights} to $d-(\m-k)e_{i_0}$ and $s=k$. For each $J\in\mathbb{J}$, just looking for the sums in $\inv{2}\abs{D_{x,y}(\xi)}^2$ related to $J$ and using the change of variables formula to an appropriated linear isomorphs of $\R^k$, we have $$\int_{\R^k}\abs{\xi}^{\mathfrak{m}-k}e^{-\inv{2}\abs{D_{x,y}(\xi)}^2}d\xi
\leq c'z_{i_0}^{k-\m}z^{-\ha{J}}\int_{\R^k}|\eta|^{\m-k}e^{-\inv{2}|\eta|^2}d\eta,$$
for some constant $c'>0$ depending only on $\pi$ and $\m-k$, where $\ha{J}:=(\#\textsc{j}_1,\ppp,\#\textsc{j}_n)$. Therefore
$$\int_{\R^k}\abs{\xi}^{\mathfrak{m}-k}e^{-\inv{2}\abs{D_{x,y}(\xi)}^2}d\xi
\leq c''z_{i_0}^{k-\m}\prod_{J\in\mathbb{J}}z^{-\alpha_J\ha{J}}=c''z^{-(\sum_J\alpha_J\ha{J}+(\m-k)e_{i_0})}=c''z^{-d'}.$$

Suppose $k'-1<\m<k'$, where $1\leq k'\leq k$. We apply the Lemma \ref{Existence_of_weights} to $d$ and $s=\m$. Let $(J,i)\in\mathbb{\ov{J}}$ with $\#\textsc{j}_1+\ppp+\#\textsc{j}_n=k'$. From $\m<k'$ we have $\textsc{j}_i\neq\emptyset$. In the same way as in the previous case, notice that
$$\int_{\R^k}\abs{\xi}^{\mathfrak{m}-k}e^{-\inv{2}\abs{D_{x,y}(\xi)}^2}d\xi
\leq \nha{c}z^{-\ha{J}}\int_{\R^{k'}}\int_{\R^{k-k'}}\paren{|\eta'_1|\2z_i+|\eta''|}^{\m-k}e^{-\inv{2}|\eta'|^2}d\eta'd\eta'',$$
for some constant $\nha{c}>0$ depending only on $\pi$ and $\m-k$. We affirm that
$$\int_{\R^{k'}}\int_{\R^{k-k'}}\paren{|\eta'_1|\2z_i+|\eta''|}^{\m-k}e^{-\inv{2}|\eta'|^2}d\eta'd\eta''\leq \nha{c}'z_i^{k'-\m},$$
for some constant $\nha{c}'>0$ depending only on $\m, k, k'$.
If $k'=k$ the affirmation is true, since $\m-k>-1$. If $k'<k$, applying polar coordinates in $\R^{k-k'}$ we have
\begin{align*}
\int_{\R^{k'}}\int_{\R^{k-k'}}\paren{|\eta'_1|\2z_i+|\eta''|}^{\m-k}e^{-\inv{2}|\eta'|^2}d\eta'd\eta''
\leq& C\int_{\R_+}\int_{\R_+}(t\2z_i+r)^{\m-k'-1}e^{-\inv{2}t^2}drdt\\
=&C(k'-m)^{-1}\int_{\R_+}(t\2z_i)^{\m-k'}e^{-\inv{2}t^2}dt.
\end{align*}
Then $\int_{\R^k}\abs{\xi}^{\mathfrak{m}-k}e^{-\inv{2}\abs{D_{x,y}(\xi)}^2}d\xi
\leq\nha{c}''z^{-\ha{J}(i)}$, and therefore
$$\int_{\R^k}\abs{\xi}^{\mathfrak{m}-k}e^{-\inv{2}\abs{D_{x,y}(\xi)}^2}d\xi
\leq\nha{c}''\prod_{(J,i)\in\mathbb{\ov{J}}}z^{-\alpha_{(J,i)}\ha{J}(i)}=\nha{c}''z^{-\sum_{(J,i)\in\mathbb{\ov{J}}}\alpha_{(J,i)}\ha{J}(i)}=\nha{c}''z^{-d'}.$$
\end{proof}

\begin{proof}[Proof of Lemma \ref{Existence_of_weights}] \emph{Claim:} The vertices of the polyhedron
\begin{align*}
P=\Big\{(d_1,\ppp,d_n)\in\R^n, d_1\geq0,\ppp,d_n&\geq0\\
\sum_{i\in I}d_i + \dim \big(\sum_{i\in I^c}V_i\big)&\geq s, \textrm{ for all } I\subset\set{1,\ppp,n}\Big\}
\end{align*}
have all the form $\ha{J}(i)$ for some $(J,i)\in\mathbb{J}$.
\vskip .1 in
$P\subset\overline{\R}^n_+$, therefore $P$ is a pointed polyhedron (i.e. it does not contain any non trivial affine subspace). We proceed by induction on $n$. For $n=1$ it is trivial. Let $x=(x_1,\ppp,x_n)$ any vertex of the polyhedron. Then, there are $n$ independent inequalities from the definition of $P$ that become equality in $x$ (see \cite{Schrijver_1986}, page 104).

If $x_n=0$, notice that $x'=(x_1,\ppp,x_{n-1})$ is now a vertex of the polyhedron
\begin{align*}
P'=\Big\{(d_1,\ppp,d_{n-1})\in\R^{n-1}, d_1\geq0,\ppp,d_{n-1}&\geq0\\
\sum_{i\in I}d_i + \dim \big(\sum_{i\in I^c}V_i\big)&\geq s, \textrm{ for all } I\subset\set{1,\ppp,n-1}\Big\}
\end{align*}
(i.e. $x'\in P'$ and $x'$ satisfies $n-1$ independent equalities). By induction hypothesis, there exist some   $J'=(\textsc{j}'_1,\ppp,\textsc{j}'_{n-1})\in\mathbb{J}'$ and $i'\in\set{1,\ppp,n-1}$  such that $x'=\ha{J'}(i')$. Then, $J=(\textsc{j}'_1,\ppp,\textsc{j}'_{n-1},\emptyset)\in\mathbb{J}$ and $i=i'$ are such that $x=\ha{J}(i)$.

Suppose $x_1\neq0,\ppp,x_n\neq0$. By simplicity, we denote $\sum_{i\in I}V_i$ by $V_I$. Consider
$$\mathcal{I}=\set{I\subset\set{1,\ppp,n}, I\neq\emptyset, \sum_{i\in I}x_i + \dim V_{I^c} = s}.$$
By the assumption on $x$, there are $I_1,\ppp,I_n\in\mathcal{I}$ such that the associated $0,1$ row vectors $\nha{I}_1,\ppp,\nha{I}_n$ defining the equalities, are independent.

If $I,J\in\mathcal{I}$, then
\begin{align*}
\dim V_{I^c}+\dim V_{J^c} &= 2s-\sum_{i\in I}x_i-\sum_{i\in J}x_i\\
&=2s-\sum_{i\in I\cup J}x_i-\sum_{i\in I\cap J}x_i\\
&\leq \dim V_{I^c\cap J^c} + \dim V_{I^c\cup J^c}\\
&\leq \dim (V_{I^c}\cap V_{J^c}) + \dim (V_{I^c}+ V_{J^c})\\
&= \dim V_{I^c}+\dim V_{J^c},
\end{align*}
therefore, $I\cup J\in \mathcal{I}$ and $I\cap J\in\mathcal{I}$. Let $I_0\in\mathcal{I}$ a minimal element by inclusion. Then, for any $J\in\mathcal{I}$, we have
$$I_0\subset J \textrm{ or } I_0\cap J=\emptyset.$$
This means the invertible matrix of rows $\nha{I}_1,\ppp,\nha{I}_n$ has $\#I_0$ identical columns, and therefore $\#I_0=1$, say $I_0=\set{n}$, or, equivalently, $x_n=s-\dim( V_1+\ppp+V_{n-1})$.

Notice that now $\nha{x}=(x_1,\ppp,x_{n-1})$ is a vertex of the polyhedron
\begin{align*}
\nha{P}=\Big\{(d_1,\ppp,d_{n-1})\in\R^{n-1}, d_1&\geq0,\ppp,d_{n-1}\geq0\\
\sum_{i\in I}d_i + \dim \big(\sum_{i\in I^c}V_i\big)&\geq \dim(V_1+\ppp+V_{n-1}), \textrm{ for all } I\subset\set{1,\ppp,n-1}\Big\}
\end{align*}
By induction hypothesis, there exist some appropriate  $\nha{J}=(\nha{\textsc{j}}_1,\ppp,\nha{\textsc{j}}_{n-1})\in\mathbb{\nha{J}}$ such that $\nha{x}=(\#\nha{\textsc{j}}_1,\ppp,\#\nha{\textsc{j}}_{n-1})$. We can take $\textsc{j}_n\subset\set{v^n_1,\ppp,v^n_{m_n}}$ such that $V_1+\ppp+V_{n-1}+\langle\textsc{j}_n\rangle=V_1+\ppp+V_{n}$ and $J=(\nha{\textsc{j}}_1,\ppp,\nha{\textsc{j}}_{n-1},\textsc{j}_n)\in\mathbb{J}$. Notice that $x=\ha{J}(n)$. This finishes the proof of the claim.
\vskip .1 in
To finish the prove of the lemma, notice that for a pointed polyhedron $P$ (see \cite{Schrijver_1986}, page 108), we have
$$P=\textrm{conv.hull}\set{x^1,\ppp,x^r}+\textrm{cone}\set{y^1,\ppp,y^t}$$
where $x^i$ are the vertices of $P$ and $y^i$ are its extremal rays; and we have necessary $y^i\geq 0$ since $P\subset\overline{\R}^n_+$.
\end{proof}

\begin{rem}
Notice that $\ha{J}(i)\in P$ for all $(J,i)\in\mathbb{J}$, hence we conclude from Lemma \ref{Existence_of_weights} that
$$P=\textrm{conv.hull}\{\ha{J}(i), (J,i)\in\mathbb{J} \}+\textrm{cone}\set{e_1,\ppp,e_n}.$$
\end{rem}

\bibliographystyle{amsplain}
\bibliography{bibi}

\providecommand{\bysame}{\leavevmode\hbox to3em{\hrulefill}\thinspace}
\providecommand{\MR}{\relax\ifhmode\unskip\space\fi MR }
\providecommand{\MRhref}[2]{%
  \href{http://www.ams.org/mathscinet-getitem?mr=#1}{#2}
}
\providecommand{\href}[2]{#2}
\begin{thebibliography}{1}

\bibitem{Hochman_Shmerkin_2009}
M.~Hochman and P.~Shmerkin, \emph{Local entropy averages and projections of
  fractal measures}, Annals of Math (To appear).

\bibitem{Kaufman_1968}
R.~Kaufman, \emph{On hausdorff dimension of projections}, Mathematika
  \textbf{15} (1968), 153--155.

\bibitem{Marstrand_1954}
J.~M. Marstrand, \emph{Some fundamental geometrical properties of plane sets of
  fractional dimensions}, Proc. London Math. Soc. (3) \textbf{4} (1954),
  257--302.

\bibitem{Mattila_1975}
P.~Mattila, \emph{Hausdorff dimension, orthogonal projections and intersections
  with planes}, Ann. Acad. Sci. Fenn. Math. \textbf{1} (1975), 227--244.

\bibitem{Mattila_1995}
\bysame, \emph{Geometry of sets and measures in euclidean spaces : fractals and
  rectifiability}, Cambridge University Press, 1995.

\bibitem{Moreira_Yoccoz_2001}
C.G. Moreira and J.-C. Yoccoz, \emph{Stable intersection of regular cantor sets
  with large hausdorff dimentions}, Ann. of Math. (2001), no.~154, 45--96.

\bibitem{Peres_Schlag_2000}
Y.~Peres and W.~Schalg, \emph{Smoothness of projections, bernoulli
  convolutions, and the dimensions of exceptions}, Duke Math. J. \textbf{102}
  (2000), no.~2, 193--251.

\bibitem{Schrijver_1986}
A.~Schrijver, \emph{Theory of linear and integer programming}, Chichester,
  1986.

\end{thebibliography}
\end{document}